\newcommand{\yref}{y_{\mbox{\scriptsize\textit{ref}}}}
\newcommand{\wdist}{w_{\mbox{\scriptsize\textit{dist}}}}
\newcommand{\wdistk}[1][k]{w^{#1}_{\mbox{\scriptsize\textit{d}}}}
\newcommand{\CL}{C_\Lambda}
\newcommand{\Kz}{K_0}
\newcommand{\Kadd}{Q}
\newcommand{\KaddL}{Q_\Lambda}
\newcommand{\Yadd}{V}
\newcommand{\mc}[1]{\mathcal{#1}}
\newcommand{\citel}[2]{\cite[#2]{#1}}
\newtheorem{theorem}{Theorem}[section]
\newtheorem{lemma}[theorem]{Lemma}
\newtheorem{assumption}[theorem]{Assumption}
\theoremstyle{definition}
\newtheorem{definition}[theorem]{Definition}
\newtheorem{remark}[theorem]{Remark}
\newcommand{\pmat}[1]{\begin{bmatrix}#1 \end{bmatrix}}
\newcommand{\pmatsmall}[1]{\begin{bsmallmatrix}#1\end{bsmallmatrix}}
\DeclareMathOperator{\diag}{diag}
\newcommand*{\C}{{\mathbb{C}}}     
\newcommand*{\R}{{\mathbb{R}}}
\newcommand*{\N}{{\mathbb{N}}}
\newcommand*{\Lin}{{\mathcal{L}}}   
\newcommand*{\Dom}{D}
\newcommand*{\norm}[1]{\lVert#1\rVert}
\newcommand*{\set} [1]{\{#1\}}
\newcommand*{\setm}[2]{\{\,#1\mid#2\,\}}   
\newcommand*{\iprod}[2]{\langle#1,#2\rangle}
\newcommand*{\Lp}[1][p]{L^{#1}}
\newcommand*{\Xloc}[1]{#1_{\text{loc}}}
\newcommand*{\Lploc}[1][p]{\Xloc{L^{#1}}}
\newcommand{\G}{\mathcal{G}}
\newcommand{\Abs}[2][default]{\ifthenelse{\equal{#1}{default}}{\left\lvert#2\right\rvert}{\ldelim{#1}{\lvert}#2\rdelim{#1}{\rvert}}}
\newcommand{\Norm}[2][default]{\ifthenelse{\equal{#1}{default}}{\left\lVert#2\right\rVert}{\ldelim{#1}{\lVert}#2\rdelim{#1}{\rVert}}}
\newcommand*{\Iprod}[3][default]{\ifthenelse{\equal{#1}{default}}{\left\langle#2,#3\right\rangle}{\ldelim{#1}{\langle}#2,#3\rdelim{#1}{\rangle}}}
\newcommand*{\Dualpair}[3][default]{\ifthenelse{\equal{#1}{default}}{\left\langle#2,#3\right\rangle}{\ldelim{#1}{\langle}#2,#3\rdelim{#1}{\rangle}}}
\newcommand*{\List}[2][1]{\set{#1,\ldots,#2}}
\newcommand{\eq}[1]{\begin{align*}#1\end{align*}}
\newcommand{\eqn}[1]{\begin{align}#1\end{align}}
\newcommand{\ga}{\alpha}
\newcommand{\gl}{\lambda}
\newcommand{\gw}{\omega}
\newcommand{\inv}{^{-1}}
\newcommand*{\ddb}[2][1]{\ifthenelse{\equal{#1}{1}}{\frac{d}{d#2}}{\frac{d^{#1}}{d#2^{#1}}}}
\newcommand*{\pd}[3][1]{\ifthenelse{\equal{#1}{1}}{\frac{\partial{#2}}{\partial{#3}}}{\frac{\partial^{#1}{#2}}{\partial#3^{#1}}}}
\newtheorem*{RORP}{The Robust Output Regulation Problem}
\begin{document}

  \title[On Robust Regulation of PDEs]{On Robust Regulation of PDEs: from Abstract Methods to PDE Controllers}

\thanks{L. Paunonen was supported by the Academy of Finland grant 349002.
The research of J.-P. Humaloja was funded by the Jenny and Antti Wihuri Foundation and the Väisälä Fund. 
 }

\thispagestyle{plain}

\author[L. Paunonen]{Lassi Paunonen}
\address[L. Paunonen]{Mathematics, Faculty of Information Technology and Communication Sciences, Tampere University, PO.\ Box 692, 33101 Tampere, Finland}
\email{lassi.paunonen@tuni.fi}

\author[J.-P. Humaloja]{Jukka-Pekka Humaloja}
\address[J.-P. Humaloja]{University of Alberta, 9211-116 St, Edmonton, AB T6G 1H9, Canada}
\email{jphumaloja@ualberta.ca}


\begin{abstract}
In this paper we study robust output tracking and disturbance rejection of linear partial differential equation (PDE) models. We focus on demonstrating how the abstract internal model based controller design methods developed for ``regular linear systems'' can be utilised in controller design for concrete PDE systems. We show that when implemented for PDE systems, the abstract control design methods lead in a natural way to controllers with ``PDE parts''. Moreover, we formulate the controller construction in a way which utilises minimal knowledge of the abstract system representation and is instead solely based on natural properties of the original PDE. We also discuss computation and approximation of the controller parameters, and illustrate the results with an example on control design for a boundary controlled diffusion equation.
\end{abstract}

\subjclass[2020]{%
93C05, 
93B52, 
35K05, 
93B28, 
35J25
}
\keywords{Robust output regulation, PDE control, boundary control, controller design.} 

\maketitle

\section{Introduction}
\label{sec:intro}

 Robust output regulation has been studied actively in the literature for controlled linear partial differential equations as well as for distributed parameter systems. 
In this control problem the aim is to achieve asymptotic convergence of the system's output to a predefined reference signal despite a class of external disturbance signals and uncertainties in the parameters of the system.
The primary motivation for studying the control problem for infinite-dimensional linear systems is that this abstract framework facilitates the study of \emph{classes} of linear PDE models and makes it possible to introduce general controller design methods which are applicable to a range of different types of PDEs. 
This way the abstract approach unifies and avoids repetition in the parts of the controller design which are independent of the type of the PDE model under consideration.
The output regulation problem adapts extremely well to the abstract infinite-dimensional setting because the associated controller design approaches 
have 
a lot of
 structure which is either independent or depends only in a very particular way on the considered system (e.g., through transfer function values or locations of transmission zeros).

When such an abstract controller construction method is applied in the control of a concrete PDE model,
the resulting controller is typically either a finite-dimensional ODE model (such as in~\cite{LogTow97,HamPoh00,RebWei03,PauPha20}), or alternatively an abstract linear system (in~\cite{Imm07a,HamPoh10,Pau16a,Pau17b,XuDub17a,VanBri22arxiv}).
In the latter case the
 natural expectation is that the controller is ``of similar type'' as the original system, namely, a PDE model.
The abstract controllers do indeed possess this intuitive property and this structure is easy to observe in the case of PDEs with distributed inputs and outputs. However, 
this relationship between the system and controller may become less obvious in the case of PDEs with boundary control and observation, where the abstract framework has a higher level of generality due to the unboundedness of the input and output operators.
Moreover, 
some of the controller construction algorithms require a certain level of technical knowledge on the abstract framework, and this can make the design methods tedious to implement for those researchers who are not already familiar with the corresponding abstract theory.

In this paper we demonstrate how a selected controller design method for abstract infinite-dimensional systems produces PDE controllers when applied in the control of PDEs with boundary control and observation. 
Moreover, we show that the controller design method can be presented
 in a way which requires minimal knowledge of the ``abstract framework'' and where the parameter choices are completely based on the original PDE system (in particular, stabilizability via feedback and output injections, and computation of selected transfer function values).
Our results are applicable for 
a wide range of boundary controlled PDEs in 1D (such as reaction--convection--diffusion equations, damped wave and beam equations, and coupled PDE-PDE and PDE-ODE systems), as well as $n$D convection--diffusion equations.

The observer-based robust controller~\citel{Pau16a}{Sec.~VI} studied in this paper consists of an ODE part (the \emph{internal model} of the reference and disturbance signals) and a modified copy of the controlled system which is used as a Luenberger-type observer in the stabilization of the closed-loop system.
As our main result we show that when applied in PDE control, the infinite-dimensional part of our controller is always a state of a PDE system which is of similar type as the original system.
We achieve this by rewriting the abstract controller in a new way and by analysing the detailed properties of the controller state. 
In this paper we allow the controlled system to be a general regular linear system, but for simplicity limit our attention to the situation where this model can be stabilized with state feedback and output injection with bounded operators. Using the results in~\cite{Pau17b}, our approach also generalises to the situation where the stabilization of the system requires boundary feedback or boundary output injection, but the controller form becomes somewhat more complicated.
Our approach can also be applied to other abstract controller structures (e.g., those for ``non-robust'' output regulation in~\cite{XuDub17a}) to design PDE-type controllers.

 \textbf{Notation.} For Hilbert spaces $X$ and $Y$ we denote the space of bounded linear operators $A:X \to Y$ by $\Lin(X,Y)$.
The resolvent operator of
 $A: \Dom(A)\subset X\to X$ is defined as $R(\gl,A)=(\gl I-A)\inv$ for $\gl\in \C$ in the resolvent set $\rho(A)$, 
and the adjoint of $A$ is denoted by $A^\ast: \Dom(A^\ast)\subset X\to X$.
The inner product on $X$ is denoted by $\iprod{\cdot}{\cdot}_X$.
We denote the $\Lambda$-extension~\citel{TucWei14}{Def.~5.1} of an operator $C$ by $\CL$.

\section{Preliminaries}
\subsection{The Robust Output Regulation Problem}

Throughout the paper consider controlled PDE systems with an input $u(t)\in U=\C^m$, a measured output $y(t)\in Y=\C^p$, and an additional disturbance input $\wdist(t)\in U_d=\C^{n_d}$.
The main objective in \emph{output regulation} is to design a dynamic error feedback controller 
so that 
 \emph{the output $y(t)$ converges asymptotically to a given reference signal $\yref(t)$ despite the external disturbance signals $\wdist(t)$.}
The considered reference and disturbance signals are of the form%
\begin{subequations}
  \label{eq:yrefwdist}
  \eqn{
  \label{eq:yrefwdistyr}
    \hspace{-1ex}  \yref(t) &=  \sum_{k=0}^q a_k \cos(\gw_k t+\theta_k) \\
  \label{eq:yrefwdistwd}
  \hspace{-1ex} \wdist(t) &=  \sum_{k=0}^q b_k \cos(\gw_k t + \varphi_k)
  }
\end{subequations}
where the frequencies $0=\gw_0<\gw_1<\ldots<\gw_q$ are known and the amplitudes $\set{a_k}_{k=1}^q\subset Y$ and $\set{b_k}_{k=1}^q\subset U_d$ and phases $\set{\theta_k}_{k=0}^q,\set{\varphi_k}_{k=0}^q\subset [0,2\pi)$ can be unknown.

Our main control problem, the ``Robust Output Regulation Problem''~\cite{RebWei03,HamPoh10} is defined in detail in the following.

\begin{RORP}
Construct a dynamic error feedback controller 
so that the following hold.
\begin{itemize}
\item[(a)] The closed-loop system consisting of the system and the controller is exponentially stable when $\wdist(t)\equiv 0$ and $\yref(t)\equiv 0$.
\item[(b)] There exists $\ga>0$ such that for all initial states of the system and the controller and for all 
$\set{a_k}_{k=1}^q$, $\set{b_k}_{k=1}^q$, $\set{\theta_k}_{k=0}^q$, and $\set{\varphi_k}_{k=0}^q$ in~\eqref{eq:yrefwdist}
 the output $y(t)$ satisfies
\eq{
\int_0^\infty e^{2\ga t}\norm{y(t)-\yref(t)}^2dt<\infty.
}
\item[(c)] If the parameters of the system are perturbed in such a way that the exponential closed-loop stability is preserved, then 
 \textup{(b)} still holds for some modified $\tilde{\ga}>0$.
\end{itemize}
\end{RORP}

\subsection{Assumptions on the PDE System}
\label{sec:plant}

As our main assumption we suppose that
the controlled PDE system can be expressed as a \emph{regular linear system}~\cite{Wei94,TucWei14}. 
Even though the regular linear system representation of the system is required in the proofs of our main results, our goal is to present the controller design and the controller structure in a way which is largely independent of this abstract formulation.
Instead it is mainly \emph{sufficient to know that such a representation exists}.
That being said, 
we assume the PDE has an abstract representation%
\begin{subequations}
\label{eq:plant}
\eqn{
\label{eq:plantstate}
\dot{x}(t)&=Ax(t)+Bu(t) + B_d\wdist(t), \qquad x(0)=x_0\\
y(t)&=\CL x(t)+Du(t) + D_d \wdist(t).
}
\end{subequations}
We assume $(A,[B,B_d],C,[D,D_d])$ is a regular linear system~\citel{TucWei14}{Sec.~5} on a Hilbert space $X$ with input space $U\times U_d=\C^m \times \C^{n_d}$ and output space $Y=\C^p$.
In particular, $A: \Dom(A)\subset X\to X$ generates a strongly continuous semigroup $T(t)$ on $X$. 
Our assumption also implies that for any $[u,\wdist]^T\in\Lploc[2](0,\infty;U\times U_D)$ and $x_0\in X$ the state $x(t)$ of the system is the unique \emph{mild solution} of~\eqref{eq:plantstate} (defined in~\citel{TucWei09book}{Def.~4.1.1}) 
 given by~\citel{TucWei09book}{Prop.~4.2.5}
\eq{
x(t) = T(t)x_0 + \int_0^t T(t-s)\left[ Bu(s)+B_d\wdist(s) \right]ds.
}
On the other hand, by~\citel{TucWei09book}{Rem.~4.2.6} the state 
also satisfies
\eqn{
\label{eq:plantweaksol}
\begin{aligned}
\hspace{-.3cm}\iprod{x(t)-x_0}{\phi}_X
= \int_0^t \Bigl[ &\iprod{x(s)}{A^\ast \phi}_{X} + \iprod{u(s)}{B^\ast \phi}_U \\
&\quad + \iprod{\wdist(s)}{B_d^\ast \phi}_{U_d} \Bigr]ds
\end{aligned}
}
for all $t> 0$ and $\phi\in \Dom(A^\ast)$.
It is important to note that it is precisely the identity~\eqref{eq:plantweaksol} which connects the state $x(t)$ of the system~\eqref{eq:plant} to the \emph{weak solution} of the original PDE system.
This relationship is illustrated in concrete examples in~\citel{TucWei09book}{Rem.~10.2.2,~10.2.4 and Sec.~10.7,~10.8}.

We make the following assumptions on the stabilizability and transmission zeros of the controlled PDE system. 

\begin{assumption}
\label{ass:StabAss}
Assume that there exists $\Kz \in \Lin(X,U)$ such that the state feedback $u(t)=\Kz x(t)$ stabilizes system~\eqref{eq:plant} exponentially. In addition, assume that there exists $L\in \Lin(Y,X)$ such that the output injection $Ly(t)$ stabilizes system~\eqref{eq:plant} exponentially.
\end{assumption}

The fact that $\Kz $ and $L$ are bounded operators in Assumption~\ref{ass:StabAss} means that we only consider systems which are stabilizable using distributed feedback and output injection.

\begin{remark}
In terms of regular linear systems Assumption~\ref{ass:StabAss} means that 
$\Kz \in \Lin(X,U)$ and $L\in \Lin(Y,X)$ are such that 
 the semigroups generated by 
 $A+LC: \Dom(A)\subset X\to X$ and $A+B\Kz : \Dom(A+B\Kz )\subset X\to X$ with domain $\Dom(A+B\Kz )=\setm{x\in X}{Ax+B\Kz x\in X}$ are exponentially stable.
\end{remark}

The following condition 
on transmission zeros 
 is necessary for the solvability of the robust output regulation problem.
\begin{assumption}
\label{ass:TZass}
The numbers of inputs and outputs 
of~\eqref{eq:plant} 
satisfy $m\geq p$ and~\eqref{eq:plant} does not have transmission zeros at $\set{\pm i\gw_k}_{k=0}^q$.
\end{assumption}

If we denote the transfer function of the system (from the input $u(t)$ to the output $y(t)$) by $P(\gl)$, then for any $i\gw_k\in\rho(A)$ the condition in Assumption~\ref{ass:TZass} requires that $P(\pm i\gw_k)$ has full row rank.
More generally, if $i\gw_k\in i\R$ the condition requires that the transfer function
 of the system stabilized with state feedback has full row rank \mbox{at $i\gw_k$.}

\section{Controller Design}

In this section we construct an error feedback controller which solves the robust output regulation problem. 
Our main result in Theorem~\ref{thm:RORPcontroller} shows that the controller state has a part which is the weak solution of a PDE of the same form as the original system.

The controller design is based on the construction of the parameters $(G_1,G_2,L,K)$ in Definition~\ref{def:ContrPar} below. 
The construction uses the matrices $B_1^k\in \Lin(U,Y\times Y)$ and the operators $H_K^k\in \Lin(X,Y\times Y)$ defined by
\eq{
B_1^k &= 
\frac{1}{2} \pmat{P_K(i\gw_k) + P_K(-i\gw_k)\\iP_K(i\gw_k) -iP_K(-i\gw_k)}
,\quad
H_K^k = 
\frac{1}{2} \pmat{P_{KI}(i\gw_k) + P_{KI}(-i\gw_k)\\iP_{KI}(i\gw_k) -iP_{KI}(-i\gw_k)},
}
where
 $P_K(\gl)=(\CL+D\Kz)R(\gl,A+B\Kz)B+D$ 
and
 $P_{KI}(\gl)=(\CL+D\Kz)R(\gl,A+B\Kz)$. 

\begin{definition}[Controller Parameters]
\label{def:ContrPar}
Define $Z_0=\C^{p(2q +1)}$, 
\eq{
  G_1
= \diag(0_p, \gw_1\Omega_p,\ldots, \gw_q\Omega_p)\in \R^{p(2q+1)\times p(2q+1)},
}
with $\Omega_p = \pmatsmall{0_{p}& I_p\\-I_p&0_p}$,
where $0_p,I_p\in \R^{p\times p}$ are the 
 zero and identity matrices, and
\eq{
  G_2 = \bigl[I_p,I_p,0_p,I_p,0_p,\ldots,I_p,0_p\bigr]^T \in \R^{p(2q +1)\times p}.
}
Let $L\in \Lin(Y,X)$ 
and $\Kz \in \Lin(U,X)$ be as in Assumption~\ref{ass:StabAss}.
Define 
\eq{
B_1 = 
 \pmat{ P_K(0)\\B_1^1\\\vdots\\
B_1^q} 
\quad \mbox{and} \quad
H_K = 
 \pmat{ P_{KI}(0)\\ H_K^1\\ \vdots\\ H_K^q}.
}
The pair $(G_1,B_1)$ is controllable due to Assumption~\ref{ass:TZass} and  $K_1\in \Lin(Z_0,U)$ can be chosen so that $G_1+B_1K_1$ is Hurwitz. 
Finally, define $K_2 = \Kz +K_1H_K$.
\end{definition}

Theorem~\ref{thm:RORPcontroller} below presents a controller solving the robust output regulation problem
based on the parameters constructed in Definition~\ref{def:ContrPar}.
The theorem shows that 
 the controller 
 consists of an ODE system with state $z_1(t)$ (the ``internal model'') 
and an ``observer-part'' 
which is a copy of the system
 with input $u(t)$, output $\hat{y}(t)$, and an additional input with input operator $L$. 
In view of the discussion in Section~\ref{sec:plant} the result also shows that $\hat{x}(t)$ is a weak solution of a PDE of the same form as the original PDE system (with the additional input through the operator $L$ and with zero disturbance).
The controller can therefore be rewritten as a coupled PDE-ODE system, and this is illustrated further in the example considered in Section~\ref{sec:examples}.

\begin{theorem}
\label{thm:RORPcontroller}
Let $G_1$, $G_2$, $L$, and $K$ be as in Definition~\textup{\ref{def:ContrPar}} and let $e(t)=y(t)-\yref(t)$.
The robust output regulation problem is solved by the dynamic error feedback controller 
\begin{subequations}
\label{eq:maincontroller}
\eqn{
\hspace{-.4cm}\dot{z}_1(t)&=G_1 z_1(t) + G_2e(t), 
\hspace{2.2cm} 
z_1(0)\in Z\\
\dot{\hat{x}}(t)&=A\hat{x}(t)+Bu(t) + L(\hat{y}(t)-e(t)), ~ \hat{x}(0)\in X\\
\hat{y}(t) &= \CL \hat{x}(t) + Du(t)\\
u(t)&= K_1 z_1(t)+K_2\hat{x}(t).
}
\end{subequations}
With this controller the closed-loop system
(consisting of~\eqref{eq:plant} and~\eqref{eq:maincontroller})
 has a unique mild state $x_e(t)=[x(t),z_1(t),\hat{x}(t)]^T$, $u(\cdot)\in \Lploc[2](0,\infty;Y)$ $e(\cdot),\hat{y}(\cdot)\in\Lploc[2](0,\infty;Y)$ and $\hat{x}(\cdot)\in C([0,\infty);X)$ satisfies
\eq{
\iprod{\hat{x}(t)-\hat{x}(0)}{\phi}_X
= \int_0^t \Bigl[ &\iprod{\hat{x}(s)}{A^\ast \phi}_{X} + \iprod{u(s)}{B^\ast \phi}_U\\
& \quad + \iprod{L(\hat{y}(s)-e(s))}{ \phi}_{X} \Bigr]ds
}
for all $t\geq 0$ and $\phi\in \Dom(A^\ast)$.
\end{theorem}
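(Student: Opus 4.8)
The plan is to treat the controller~\eqref{eq:maincontroller} as a concrete realisation of the abstract observer-based robust controller of~\citel{Pau16a}{Sec.~VI}, and to split the argument into two largely independent parts: establishing that the closed-loop interconnection is a well-posed, exponentially stable regular linear system (from which the regulation and robustness claims follow by the internal model principle), and separately proving the regularity of $\hat{x}(\cdot)$ together with the weak-solution identity, which is the genuinely new content of the statement.

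First I would assemble the closed-loop system. Substituting $u(t)=K_1 z_1(t)+K_2\hat{x}(t)$, $\hat{y}(t)=\CL\hat{x}(t)+Du(t)$, and $e(t)=\CL x(t)+Du(t)+D_d\wdist(t)-\yref(t)$ into~\eqref{eq:plant}--\eqref{eq:maincontroller}, the combined state $x_e=[x,z_1,\hat{x}]^T$ evolves under an operator coupling the plant, the finite-dimensional internal model $(G_1,G_2)$, and the observer. Because $\Kz$, $K_1$, $K_2$, and $L$ are bounded by Assumption~\ref{ass:StabAss}, the only unbounded operators in the loop are $A$, $B$, and $\CL$ inherited from~\eqref{eq:plant}; hence the interconnection is again a regular linear system, and the feedback theory of~\cite{TucWei14} provides a unique mild state with $u(\cdot),e(\cdot),\hat{y}(\cdot)\in\Lploc[2](0,\infty;\cdot)$ and $\hat{x}(\cdot)\in C([0,\infty);X)$. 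Granting this, the weak-solution identity is almost immediate: reading the observer line of~\eqref{eq:maincontroller} as the regular linear system $(A,[B,L],C,[D,0])$ driven by $u(\cdot)$ and $v(\cdot):=\hat{y}(\cdot)-e(\cdot)$ --- adding the \emph{bounded} input operator $L$ preserves regularity --- its mild state is $\hat{x}(t)=T(t)\hat{x}(0)+\int_0^t T(t-s)[Bu(s)+Lv(s)]\,ds$, and applying~\citel{TucWei09book}{Rem.~4.2.6} exactly as in the derivation of~\eqref{eq:plantweaksol}, with the bounded term contributing $\iprod{Lv(s)}{\phi}_X$, yields the stated identity for all $\phi\in\Dom(A^\ast)$.

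The heart of the proof is exponential stability, which I would obtain by a separation-principle change of variables $[x,z_1,\hat{x}]^T\mapsto[x,z_1,\tilde{x}]^T$ with observer error $\tilde{x}=x-\hat{x}$. A direct computation gives $\dot{\tilde{x}}=(A+L\CL)\tilde{x}+(B_d+LD_d)\wdist-L\yref$, whose generator is exponentially stable by Assumption~\ref{ass:StabAss}; for part~(a) one sets $\wdist\equiv0$, $\yref\equiv0$, so this block decouples and the remaining $[x,z_1]$ dynamics are those of the augmented plant formed from~\eqref{eq:plant} and the internal model under the state feedback $u=K_1 z_1+K_2 x$. Writing $K_2=\Kz+K_1 H_K$ and combining the internal-model state with $H_K$ applied to the plant state, the construction of $B_1$ and $H_K$ from the transfer-function values $P_K(\pm i\gw_k)$, $P_{KI}(\pm i\gw_k)$ in Definition~\ref{def:ContrPar} is exactly what block-triangularises the augmented dynamics, with diagonal blocks $A+B\Kz$ (stable by Assumption~\ref{ass:StabAss}) and $G_1+B_1K_1$ (Hurwitz by construction, using that $(G_1,B_1)$ is controllable by Assumption~\ref{ass:TZass}); this establishes~(a). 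Parts~(b) and~(c) then follow from the internal model principle: since $G_1$ carries a $p$-fold copy of the frequencies $\set{\pm i\gw_k}_{k=0}^q$ generating $\yref$ and $\wdist$, closed-loop exponential stability forces the weighted error bound $\int_0^\infty e^{2\ga t}\norm{e(t)}^2\,dt<\infty$, a property that persists under any parameter perturbation preserving exponential stability, exactly as in~\citel{Pau16a}{Sec.~VI} (and~\cite{Pau17b} in the boundary-stabilisable case).

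The main obstacle I anticipate is this stability step, precisely the claim that the transfer-function-based operators $B_1$ and $H_K$ implement the required decoupling. In the regular linear system setting the unboundedness of the $\Lambda$-extension $\CL$ means that the manipulations of $\CL+DK_2$ and the intertwining (Sylvester) relation satisfied by $H_K$ cannot be taken formally but must be justified on the correct domains and through resolvent/transfer-function identities at $\pm i\gw_k$. Identifying this decoupling with the abstract construction of~\cite{Pau16a}, rather than re-deriving it from scratch, is the cleanest route and isolates the new regularity content established above.
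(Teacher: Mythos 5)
Your overall strategy coincides with the paper's: the controller \eqref{eq:maincontroller} is identified with the abstract observer-based controller of \citel{Pau16a}{Sec.~VI} on $Z=Z_0\times X$, parts (a)--(c) of the robust output regulation problem are delegated to \citel{Pau16a}{Thm.~15} (your separation-principle computation $\dot{\tilde{x}}=(A+L\CL)\tilde{x}+(B_d+LD_d)\wdist-L\yref$ and the triangularisation via $K_2=\Kz+K_1H_K$ with diagonal blocks $A+B\Kz$ and $G_1+B_1K_1$ is precisely a re-derivation of the proof of that theorem; the paper simply cites it, adding only a footnote that the case $m\geq p$ is covered because $G_1+B_1K_1$ is Hurwitz under Assumption~\ref{ass:TZass}), and the integral identity is obtained from \citel{TucWei09book}{Rem.~4.2.6} applied to the observer line with the bounded input operator $L$ --- all exactly as in the paper.

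There is, however, a genuine gap at the one point where the paper does real work: the claim $\hat{y}(\cdot)\in\Lploc[2](0,\infty;Y)$, i.e.\ that $\hat{x}(t)\in\Dom(\CL)$ for a.e.\ $t$ and that $\CL\hat{x}(\cdot)$ is locally square-integrable. You assert this follows because ``the interconnection is again a regular linear system, and the feedback theory of \cite{TucWei14} provides a unique mild state with $u(\cdot),e(\cdot),\hat{y}(\cdot)\in\Lploc[2]$''. But regularity of the closed loop only yields this for its actual output, which is the regulation error $e(t)$; the signal $\hat{y}(t)=\CL\hat{x}(t)+Du(t)$ involves the $\Lambda$-extension applied to the \emph{controller} state and is not an output of the closed-loop system, so its regularity is not automatic. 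Everything downstream in your argument rests on it: the input $v=\hat{y}-e$ of your auxiliary system $(A,[B,L],C,[D,0])$ must be known to lie in $\Lploc[2]$ before the mild-state formula and \citel{TucWei09book}{Rem.~4.2.6} can be applied, so ``granting'' the regularity is circular at exactly this step. The paper closes this gap in two moves: Lemma~\ref{lem:CLstructure} shows (via \citel{Wei94}{Thm.~5.5}) that $\KaddL z(\cdot)\in\Lploc[2]$ for \emph{any} output operator $\Kadd$ admissible for the semigroup generated by $\G_1$, and then the specific choice $\Kadd=\pmatsmall{K_1&K_2\\0&C}$ is proved admissible by writing $\G_1$ itself as an admissible output feedback perturbation of the block-diagonal regular system with generator $\diag(G_1,A+LC)$ and invoking \citel{TucWei14}{Thm.~5.17}, which also identifies $\KaddL=\pmatsmall{K_1&K_2\\0&\CL}$ with $\Dom(\KaddL)=Z_0\times\Dom(\CL)$; only then does $\hat{y}(t)=[D,\;I]\KaddL z(t)\in\Lploc[2]$ follow. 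Supplying this admissibility argument (or an equivalent one) is necessary to complete your proposal; with it, the remainder of your outline goes through as in the paper.
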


\begin{remark}
 Definition~\ref{def:ContrPar} shows that
$G_1$ and $G_2$ have explicit formulas and that $L$ and the part $\Kz$ of $K_2$ are chosen as in Assumption~\ref{ass:StabAss} based on the stabilizability properties of the original PDE system.
Finally, the values $P_K(\pm i\gw_k)$ and $P_{KI}(\pm i\gw_k)$ in $B_1$ and $H_K$ can be computed based on the original PDE, 
as shown in Section~\ref{sec:ContrParComputation}.%
\end{remark}

The controller in Theorem~\ref{thm:RORPcontroller} is based on an abstract controller introduced in~\cite{HamPoh10,Pau16a} with general structure
\begin{subequations}
\label{eq:controller}
\eqn{
\label{eq:controllerstate}
\dot{z}(t)&= \G_1 z(t) + \G_2 e(t), \qquad z(0)=z_0\in Z\\
u(t)&= Kz(t)
}
\end{subequations}
with $e(t)=y(t)-\yref(t)$
on a Hilbert space $Z$. Here $\G_1$ generates a strongly continuous semigroup on $Z$ and $\G_2\in \Lin(Y,Z)$ and $K\in \Lin(Z,U)$.
For the proof of Theorem~\ref{thm:RORPcontroller}
we define the closed-loop system 
consisting of the system~\eqref{eq:plant} and the controller~\eqref{eq:controller}.
This closed-loop system has state $x_e(t)=[x(t),z(t)]^T$ on $X_e=X\times Z$ and is of the form
\begin{subequations}
\label{eq:CLsys}
\eqn{
\dot{x}_e(t) &= A_ex_e(t) + B_e w_e(t), \qquad x_e(0)=x_{e0}\\
e(t)&= C_e x_e(t) + D_e w_e(t)
}
\end{subequations}
where $w_e(t) = [\wdist(t),\yref(t)]^T$, $x_{e0}=[x_0,z_0]^T$, 
\eq{
A_e = \pmat{A&BK\\ \G_2\CL&\G_1+\G_2DK}, \qquad B_e=\pmat{B_d&0\\0&-\G_2},
}
with domain $\Dom(A_e)=\setm{[x,z]^T\in \Dom(\CL)\times \Dom(\G_1)}{Ax+BKz\in X}$, and
$C_e=[\CL,DK]$ and $D_e=[0,-I]$. The closed-loop system $(A_e,B_e,C_e,D_e)$ is a regular linear system~\citel{Pau16a}{Thm.~3}.
 The following additional properties of the closed-loop system are used in the proof of Theorem~\ref{thm:RORPcontroller}.

\begin{lemma}
\label{lem:CLstructure}
Let $x_e(t)=[x(t),z(t)]^T$ be the mild state of~\eqref{eq:CLsys}.
Then $z(t)$ is the mild solution of the differential equation~\eqref{eq:controllerstate}.
Moreover, if $\Yadd$ is a Hilbert space and $\Kadd: \Dom(\G_1)\subset Z\to \Yadd$  is an admissible output operator for the semigroup generated by $\G_1$, then $z(t)\in \Dom(\KaddL)$ for a.e. $t\geq 0$ and $\KaddL z(\cdot)\in \Lploc[2](0,\infty;\Yadd)$.
\end{lemma}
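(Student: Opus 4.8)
The plan is to split the argument into three stages: extract the regularity consequences of the closed-loop system, read off the $Z$-component of the state equation to identify $z$, and then invoke the theory of admissible observation operators for the second claim. Since $(A_e,B_e,C_e,D_e)$ is a regular linear system, its mild state satisfies $x_e(\cdot)=[x(\cdot),z(\cdot)]^T\in C([0,\infty);X_e)$ and its output obeys $e(t)=\CeL x_e(t)+D_ew_e(t)$ for a.e.\ $t$; with $C_e=[\CL,DK]$ and $D_e=[0,-I]$ this is the pointwise identity $e(t)=\CL x(t)+DKz(t)-\yref(t)$ for a.e.\ $t\geq 0$. In particular $e(\cdot)\in\Lploc[2](0,\infty;Y)$, and since $\G_2\in\Lin(Y,Z)$ is bounded the forcing $\G_2e(\cdot)$ lies in $\Lploc[2](0,\infty;Z)$, so the variation-of-parameters formula $v(t)=S(t)z_0+\int_0^tS(t-s)\G_2e(s)\,ds$ defines a genuine mild solution of \eqref{eq:controllerstate}, where $S(\cdot)$ is the semigroup generated by $\G_1$. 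It then suffices to show $z=v$.

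To identify $z$ I would write the closed-loop generator as the feedback perturbation $A_e=\blkdiag(A,\G_1)+\blkdiag(B,\G_2)\,\mathcal{C}_e$ with $\mathcal{C}_e=\pmatsmall{0&K\\\CL&DK}$, and use that the mild state solves $\dot{x}_e(t)=(A_e)_{-1}x_e(t)+B_ew_e(t)$ for a.e.\ $t$ in the extrapolation space $X_{e,-1}$. Since the diagonal part $\blkdiag(A,\G_1)$ is block diagonal, $X_{e,-1}$ splits as $X_{-1}\times Z_{-1}$ and the equation decouples row-wise; reading the $Z$-row and using that $\G_2$ is bounded gives $\dot{z}(t)=(\G_1)_{-1}z(t)+\G_2\bigl(\CL x(t)+DKz(t)-\yref(t)\bigr)$ for a.e.\ $t$. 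Substituting the output identity collapses the bracket to $e(t)$, so $z$ solves $\dot{z}=(\G_1)_{-1}z+\G_2e$ in $Z_{-1}$ with $z(0)=z_0$, which is exactly the characterization of the mild solution of \eqref{eq:controllerstate}; hence $z=v$. The main obstacle is the rigorous treatment of the unbounded coupling $\G_2\CL x$: it cannot be read off componentwise for a general state, and the point is that it enters the closed loop only through the combination $\G_2e$, a well-defined $\Lploc[2]$ function thanks to regularity. Making the extrapolation-space splitting and the admissibility of the feedback precise---via \citel{Pau16a}{Thm.~3} and the feedback calculus for regular systems---is where the real work lies; the weak-form alternative of testing the integrated closed-loop identity against $[0,\psi]^T$, $\psi\in\Dom(\G_1^\ast)$, runs into the same difficulty, as such vectors need not lie in $\Dom(A_e^\ast)$.

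For the second claim $z$ is now known to be the mild solution of \eqref{eq:controllerstate} with the bounded control operator $\G_2$ and input $e\in\Lploc[2](0,\infty;Y)$, and I would treat the two terms of $z(t)=S(t)z_0+\int_0^tS(t-s)\G_2e(s)\,ds$ separately. For the semigroup term, admissibility of $\Kadd$ for $S(\cdot)$ gives that $t\mapsto\KaddL S(t)z_0$ is defined for a.e.\ $t$ and lies in $\Lploc[2](0,\infty;\Yadd)$ for every $z_0\in Z$. For the convolution term, the boundedness of $\G_2$ together with the admissibility estimate $\norm{\Kadd R(\gl,\G_1)}\lesssim(\re\gl)^{-1/2}$ shows that the transfer function $\Kadd R(\cdot,\G_1)\G_2$ is bounded on a right half-plane and tends to $0$ at $+\infty$, so $(\G_1,\G_2,\Kadd,0)$ is a regular linear system with zero feedthrough; its defining property then yields that its state lies in $\Dom(\KaddL)$ for a.e.\ $t$ and that $\KaddL$ applied to the convolution lies in $\Lploc[2]$. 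Combining the two contributions gives $z(t)\in\Dom(\KaddL)$ for a.e.\ $t\geq 0$ and $\KaddL z(\cdot)\in\Lploc[2](0,\infty;\Yadd)$, which is the assertion.
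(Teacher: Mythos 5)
Your treatment of the second claim is correct and essentially identical to the paper's: since $\G_2$ is bounded, $(\G_1,\G_2,\Kadd,0)$ is a regular linear system, and once $z$ is known to be the mild solution of \eqref{eq:controllerstate} with input $e\in\Lploc[2](0,\infty;Y)$, the conclusion that $z(t)\in\Dom(\KaddL)$ for a.e.\ $t$ and $\KaddL z(\cdot)\in\Lploc[2](0,\infty;\Yadd)$ is exactly \citel{Wei94}{Thm.~5.5}. The problem is the first claim, where your identification of $z$ is not a proof but a plan whose hardest step is left open --- and you say so yourself.

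Concretely, your argument rests on two unjustified steps: (i) that the extrapolation space $X_{e,-1}$ of the closed-loop generator $A_e$ ``splits as $X_{-1}\times Z_{-1}$'', and (ii) that along the mild trajectory one may replace $(A_e)_{-1}x_e(t)$ by $\blkdiag(A,\G_1)_{-1}x_e(t)+\blkdiag(B,\G_2)\pmatsmall{0&K\\\CL&DK}x_e(t)$ for a.e.\ $t$. Neither is automatic. The extrapolation space $X_{e,-1}$ is built from the resolvent of $A_e$ itself, which couples the two components through the unbounded term $\G_2\CL$, so there is no a priori reason it should be the product of the componentwise extrapolation spaces; and the perturbation formula for $A_e$ holds a priori only on $\Dom(A_e)$, not at a.e.\ point of a mild trajectory --- extending it to trajectories is precisely the content of the feedback theory for regular systems, not a consequence of general principles. (The same caveat already affects your pointwise output identity $e(t)=\CL x(t)+DKz(t)-\yref(t)$, which reads the closed-loop Lambda-extension componentwise.) Writing that making this precise ``is where the real work lies'' concedes that the core of the lemma is missing. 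The paper sidesteps all of this by reversing the viewpoint: it exhibits the closed loop not as a perturbed generator to be decomposed, but as the \emph{block-diagonal open-loop} regular system (plant and controller uncoupled, with stacked inputs $[u,\wdist,u_c]^T$ and outputs $[y,y_c]^T$) under an admissible static output feedback, and then invokes \citel{Wei94}{Thm.~6.1 and Eq.~(6.1)}, which states directly that the closed-loop state coincides with the open-loop mild trajectory driven by the feedback-generated input. Because the open-loop generator is block diagonal, its mild solution formula decouples componentwise with no extrapolation-space argument at all, and the $Z$-component is exactly $z(t)=S(t)z_0+\int_0^t S(t-s)\G_2e(s)\,ds$, where $S(t)$ is the semigroup generated by $\G_1$. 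To repair your proof along your own lines you would essentially have to reprove that feedback theorem; the economical fix is to set up the open-loop system and the feedback pair as the paper does and cite it.
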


\begin{proof}
Consider an open loop system~\citel{Pau17b}{Thm.~2.3}
\eq{
\biggl(\pmat{A&0\\0&\G_1},\pmat{B&B_d&0\\0&0&\G_2},\pmat{C&0\\0&K},\pmat{D&0&0\\0&0&0}\biggr)
}
with input $[u(t),\wdist(t),u_c(t)]^T$ and output $[y(t),y_c(t)]^T$.
It is easy to see that this is a regular linear system on $X_e=X\times Z$. 
The closed-loop system~\eqref{eq:CLsys} is obtained from the open loop system by applying the admissible output feedback
\eq{
\pmat{u(t)\\\wdist(t)\\u_c(t)}=\pmat{0&I\\0&0\\I&0}\pmat{y(t)\\y_c(t)}+\pmat{0\\\wdist(t)\\-\yref(t)},
}
subsequently ignoring the first input and the second output, and finally adding the feedthrough term $D_ew_e(t)$. This feedback structure together with~\citel{Wei94}{Thm.~6.1 and Eq.~(6.1)} imply that $z(t)$ is indeed the mild solution of~\eqref{eq:controllerstate}.

To prove the second claim we can note that since $\G_2$ is a bounded operator,  $(\G_1,\G_2,\Kadd,0)$ is a regular linear system. Since $z(t)$ is the mild solution of~\eqref{eq:controllerstate}  and since the regulation error satisfies $e(\cdot)\in\Lploc[2](0,\infty;Y)$ (as the output of the regular closed-loop system), we have $z(t)\in \Dom(\KaddL)$ for a.e. $t\geq 0$ and $\KaddL z(\cdot)\in \Lploc[2](0,\infty;\Yadd)$ by~\citel{Wei94}{Thm.~5.5}.
\end{proof}

\begin{proof}[Proof of Theorem~\textup{\ref{thm:RORPcontroller}}]
Definition~\ref{def:ContrPar} and~\citel{Pau16a}{Thm.~15}\footnote{The result~\citel{Pau16a}{Thm.~15} assumes
 that the system has an equal number of inputs and outputs, i.e., $m=p$. However, the result and its proof remain valid for $m\geq p$ under Assumption~\ref{ass:TZass} 
since $G_1+B_1K_1$ is Hurwitz by the choice of $K_1$.
} imply that the
robust output regulation problem is solved by an
 abstract controller of the form~\eqref{eq:controller} on $Z=Z_0\times X$ with state $z(t)=[z_1(t),\hat{x}(t)]^T$ and 
with parameters
\begin{equation*}
\begin{aligned}
\mathcal{G}_1&=\begin{bmatrix}
G_1&0\\
(B+LD)K_1&A+L\CL+(B+LD)K_2
\end{bmatrix}\\
\Dom(\mc{G}_1)&=\setm{\pmatsmall{z_1\\x}\in Z_0\times \Dom(\CL)}{Ax+BK\pmatsmall{z_1\\x}\in X}\\
\mathcal{G}_2&=\begin{bmatrix}
G_2\\
-L
\end{bmatrix}, \quad
K=\pmat{K_1,~K_2}, 
\quad K_2=\Kz +K_1H_K.
\end{aligned} 
\end{equation*}
The closed-loop system has a well-defined mild state $x_e(t)=[x(t),z_1(t),\hat{x}(t)]^T$. 
Thus it remains to show that 
$[z_1(t),\hat{x}(t)]^T$ is the mild state of the controller~\eqref{eq:maincontroller} and that 
$u(\cdot)$, $e(\cdot)$, $\hat{y}(\cdot)$, and $\hat{x}(\cdot)$ have the claimed properties.

Define 
$\Kadd = \pmatsmall{K_1&K_2\\0&C}$ with $\Dom(\Kadd)=Z_0\times \Dom(A)$.
We have 
\eq{
\G_1 = \pmat{G_1&0\\0&A+L\CL} + \pmat{0\\B+LD}\pmat{I&0} \pmat{K_1&K_2\\0&\CL},
}
where 
\eq{
\biggl(\pmat{G_1&0\\0&A+LC}, \pmat{0\\B+LD},\pmat{K_1&K_2\\0&C}, \pmat{0\\0}\biggr)
}
is a regular linear system. 
We therefore have from~\citel{TucWei14}{Thm.~5.17} 
that
$\Kadd$ is 
an admissible output operator for the semigroup generated by $\G_1$ and its
 $\Lambda$-extension is given by
 $\KaddL=\pmatsmall{K_1&K_2\\0&\CL}$ with $\Dom(\KaddL)=Z_0\times \Dom(\CL)$.
  Lemma~\ref{lem:CLstructure} thus implies
that 
$z(t)\in \Dom(\KaddL) = Z_0\times \Dom(\CL)$ for a.e. $t\geq 0$ and $\KaddL z(\cdot)\in \Lploc[2](0,\infty;Y)$. 
But since $\hat{y}(t) =\CL \hat{x}(t) + Du(t)=[D,I]\KaddL z(t)  $, this immediately implies 
$\hat{y}(\cdot) \in \Lploc[2](0,\infty;Y)$.
Moreover, the regularity of the closed-loop system implies
 $e(\cdot)\in \Lploc[2](0,\infty;Y)$, and thus also the output $u(t)$ of~\eqref{eq:controller} satisfies
$u(\cdot)\in\Lploc[2](0,\infty;U)$.

By Lemma~\ref{lem:CLstructure} the function $z(t) = [z_1(t),\hat{x}(t)]^T$ is the mild solution of~\eqref{eq:controllerstate}.
Since $Z_0$ is finite-dimensional, the triangular structure of $\G_1$
 implies that $z_1(t)$ is the (strong) solution of 
$\dot{z}_1(t)=G_1 z_1(t) + G_2e(t)$.
Moreover, the structure of $\G_1$ 
 and $u(t)=K_1z_1(t) + K_2 \hat{x}(t)$
also imply that $\hat{x}(t)$ is the mild solution of the differential equation
\eq{
\dot{\hat{x}}(t)
 &= (B+LD)K_1z_1(t) 
 + (A+L\CL +(B+LD)K_2) \hat{x}(t) - L e(t)\\
 &= A \hat{x}(t)+ (B+LD)(K_1z_1(t) + K_2 \hat{x}(t)) 
 +L(\CL  \hat{x}(t) -  e(t))\\
 &= A \hat{x}(t)+ Bu(t) +L(  \hat{y}(t)   -  e(t)).
}
By~\citel{TucWei09book}{Rem.~4.2.6} this means that $\hat{x}(\cdot)$ is continuous and that it satisfies the integral equation in the claim.
\end{proof}

\section{Computing the Controller Parameters}
\label{sec:ContrParComputation}

In this section we describe how
the values $P_K(\pm i\gw_k)$ and $P_{KI}(\pm i\gw_k)$ used
 in the controller construction
 can be computed based on the original PDE system.

\subsection{The General Transfer Function Approach}
\label{sec:ContrParGeneral}

The definitions 
\eq{
P_K(\gl) &= (\CL+D\Kz)R(\gl,A+B\Kz)B+D\\
 P_{KI}(\gl) &= (\CL+D\Kz)R(\gl,A+B\Kz)
}
 imply that
 $[P_K(\gl),P_{KI}(\gl)]$ is the transfer function of the regular linear system $(A+B\Kz ,[B,I],C+D\Kz ,[D,0])$. This is precisely
 the system $(A,[B,I],C,[D,0])$ under state feedback $[u(t),\psi(t)]^T=[\Kz x(t)+\tilde{u}(t),\psi(t)]^T$ (see Fig.~\ref{fig:SysAddInput}).

\begin{figure}[h!]
\begin{center}
\includegraphics[width=.57\linewidth]{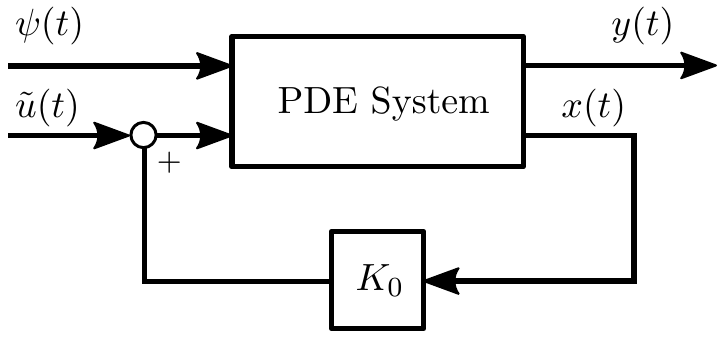}
\caption{The system structure for computing $P_K(\pm i\gw_k)$ and $P_{KI}(\pm i\gw_k)$.}
\label{fig:SysAddInput}
\end{center}
\end{figure}

This feedback structure and
 the fundamental properties of transfer functions imply that the values of $P_K(i\gw) $ and $P_{KI}(i\gw)$ for $\gw\in\set{ \pm \gw_k}_{k=0}^q$ 
can be computed from the original PDE system 
  in the following way (cf.~\cite{Zwa04,CurMor09}): 
\begin{quotation}
\emph{
Add a new distributed input $\psi(t)$
 to the PDE system
 corresponding to the input operator $I\in \Lin(X)$.
Let $u_0\in U$, $\psi_0\in X$, and $\gw\in \R$.
If $x_0$ is the (unique) initial data of the PDE system such that the weak solution corresponding to the input $(u(t),\psi(t))=(e^{i\gw t}\Kz x_0+e^{i\gw t}u_0,e^{i\gw t}\psi_0)$ has the form $x(t)=e^{i\gw t}x_0$, then the corresponding output has the form $y(t)=e^{i\gw t}y_0$ where $y_0=P_K(i\gw)u_0+P_{KI}(i\gw)\psi_0$.
}
\end{quotation}

As noted in~\citel{CurMor09}{Sec.~1.1}, the above approach (after elimination of the common factors $e^{i\gw t}$) leads to the same static differential equation for $x_0\in X$
 as taking the formal Laplace transform of the PDE system with an additional distributed input $\psi(t)$, under state feedback $[u(t),\psi(t)]^T=[\Kz x(t)+\tilde{u}(t),\psi(t)]^T$, and with zero initial condition.
This same property can also be observed in the abstract system~\eqref{eq:plant}: 
It is easy to use~\citel{TucWei09book}{Rem.~4.2.6} to show that $x_0\in X$ has the above properties if and only if 
\eq{
i\gw\iprod{x_0}{\phi}_X
= \iprod{x_0}{A^\ast \phi}_{X}  \hspace{-.03cm} + \hspace{-.03cm}  \iprod{u_0 \hspace{-.03cm} + \hspace{-.03cm} \Kz x_0}{B^\ast \phi}_U  \hspace{-.03cm} + \hspace{-.03cm}  \iprod{\psi_0}{ \phi}_X 
}
for all $\phi\in \Dom(A^\ast)$. This equation coincides (in a weak sense) with the formal Laplace transform of the corresponding differential equation (with zero initial condition).

\begin{remark}
\label{rem:ContrParamBCS}
The differential equation for computing $P_K(i\gw) $ and $P_{KI}(i\gw)$ for $\gw\in\set{ \pm \gw_k}_{k=0}^q$ has a particularly concrete representation if the original PDE can be expressed as an \emph{abstract Boundary Control System}
\eq{
\dot{x}(t)&= \mc{A}x(t)+B_d^0\wdistk[0](t), \qquad x(0)=x_0 \\
\mc{B}x(t)&= u(t)+\wdistk[1](t),
 \qquad \mc{B}_dx(t)=\wdistk[2](t)\\
y(t)&=\mc{C}x(t),
}
where $\mc{A}: \Dom(\mc{A})\subset X\to X$ is a differential operator and $\mc{B}\in \Lin(\Dom(\mc{A}),U)$ and $\mc{B}_d\in \Lin(\Dom(\mc{A}),U_d)$ are boundary trace operators (see~\cite{Sal87a,CheMor03,MalSta06} for details). In this situation, the above approach (and elimination of the common factors $e^{i\gw t}$) shows that 
if $u_0\in U$, $\psi_0\in X$ and $\gw\in\R$ and if $x_0\in \Dom(\mc{A})$ is the solution of the boundary value problem
\begin{subequations}
\label{eq:PKBVP}
\eqn{
(i\gw - \mc{A})x_0&=\psi_0, \\
\label{eq:PKBVPBC}
\mc{B}x_0&= \Kz x_0+ u_0, \quad \mc{B}_dx_0=0,
}
\end{subequations}
then $y_0 = \mc{C}x_0=P_K(i\gw)u_0+P_{KI}(i\gw)\psi_0$. 
In particular, $x_0\in X$ satisfies the boundary conditions of the static differential equation~\eqref{eq:PKBVP}.
\end{remark}

\begin{remark}
As shown in~\citel{Pau16a}{Thm.~15}, the operator $H_K\in \Lin(X,Z_0)$ is the solution of the Sylvester equation
 \eq{
G_1H_K=H_K(A+B\Kz )+G_2(\CL+D\Kz )
}
defined on $\Dom(A+B\Kz )$. However, we emphasize that 
 $H_K$ has an explicit formula based on $P_{KI}(\pm i\gw_k)$, and solving this operator equation is not required.
On the other hand, in certain situations such as for parabolic equations the operator $H_K$ can be approximated reliably with a solution of the Sylvester equation projected onto a finite-dimensional space.
\end{remark}

\subsection{Reduction to Simpler Systems}
\label{sec:ContrParPertForm}

In the case where $i\gw\in \rho(A)$ the values $P_K(i\gw)$ and $P_{KI}(i\gw)$ can be computed based on solutions of simpler differential equations. 
Standard properties of transfer functions show that for any $\gl\in\rho(A)\cap \rho(A+B\Kz )$ 
we have
\eq{
P_K(\gl)u_0&=  P(\gl)(I-G_K(\gl))\inv u_0\\
P_{KI}(\gl)\psi_0&=  C R(\gl,A) \psi_0 +P_K(\gl) \Kz  R(\gl,A) \psi_0
}
 where $G_K(\gl):=\Kz R(\gl,A) B$ is the transfer function of the system $(A,B,\Kz ,0)$.
The system
\eqn{
\label{eq:ContrParExtSys}
\left( A,[B,I],\pmat{C\\\Kz },\pmat{D&0\\0&0} \right),
}
is the original PDE system with an additional distributed input $\psi(t)$ corresponding to the input operator $I\in \Lin(X)$ and with an additional output with operator $\Kz $.
The transfer function of~\eqref{eq:ContrParExtSys} is given by
\eq{
\pmat{P(\gl)&CR(\gl,A)\\G_K(\gl)&K_0R(\gl,A)},
}
and thus its values at $\gl=\pm i\gw_k$ contain the necessary information 
for computing $P_K(\pm i\gw_k)$ and $P_{KI}(\pm i\gw_k)$.
This transfer function of~\eqref{eq:ContrParExtSys} can be computed using the same approach as in Section~\ref{sec:ContrParGeneral} (but without 
the state feedback).

\subsection{Numerical Approximations}
\label{sec:ContrParNumerical}

Due to the internal model structure of the controller the output tracking and disturbance rejection are achieved whenever the parameter $K=[K_1,K_2]$ of the controller is such that the closed-loop system is exponentially stable.  Replacing $K$ with $\tilde{K}$ in the closed-loop system leads to
\eq{
\tilde{A}_e 
= \pmat{A&B\tilde{K}\\ \G_2\CL&\G_1+\G_2D\tilde{K}}
= A_e + \pmat{B\\\G_2D}\pmat{0,\;\tilde{K}-K}.
}
Since the nominal values $K_1$ and $K_2=\Kz +K_1H_K$ are guaranteed to stabilize the closed-loop semigroup $T_e(t)$ generated by $A_e$ and since $\pmatsmall{B\\\G_2D}$ is an admissible input operator for $T_e(t)$, the closed-loop system is stable for any $\tilde{K}$ for which $\norm{\tilde{K}-K}$ is sufficiently small.
Because of this, we can replace $K_1$ and $H_K$ in the controller with any approximations $\tilde{K}_1$  and $\tilde{H}_K$ for which $\norm{\tilde{K}_1-K_1}$ and $\norm{\tilde{H}_K-H_K}$ are sufficiently small. This immediately implies that it is sufficient to compute the
values $[P_K(\pm i\gw_k),P_{KI}(\pm i\gw_k)]$ with finite numerical accuracy, e.g., using software
for solving the boundary value problems in 
Sections~\ref{sec:ContrParGeneral} and~\ref{sec:ContrParPertForm}.

 Moreover, since $\dim Y=p$, the operators $P_{KI}(\pm i\gw_k)$ are compact and can be approximated 
 with finite-rank operators.
For any orthonormal basis $\set{\psi_n}_{n\in\N}$ of $X$ we can define 
\eq{
H_K^N = \sum_{n=1}^N \iprod{\cdot}{\psi_n}P_{KI}(\pm i\gw_k)\psi_n,
}
and the approximation error $\norm{H_K^N-H_K}$  can be made arbitrarily small 
with a sufficiently large $N\in\N$.
This shows that it is sufficient to compute 
 $P_{KI}(\pm i\gw_k)\psi_0$ for $\psi_0=\psi_n$ for a finite number of basis functions $n\in \List{N}$. In the method presented in Sections~\ref{sec:ContrParGeneral} and~\ref{sec:ContrParPertForm} this means that 
\emph{only a finite number of boundary value problems with
$\psi_0=\psi_n$, $n\in \List{N}$, need to be solved} (and each of these can be solved numerically).

\section{Controller Design for Heat Equations}
\label{sec:examples}

As a concrete model we consider a heat equation
    \eq{
      x_t(\xi,t) &= \Delta x(\xi,t) + B_d^0(\xi)\wdistk[0](t), \quad x(\xi,0)=x_0(\xi) \\
      \pd{x}{n}(\xi,t)\vert_{\partial \Omega} &= b(\xi)u(t)+B_d^1(\xi)\wdistk[1](t)\\
      y(t) &= \int_{\partial \Omega}x(\xi,t)c(\xi)d\xi, \qquad 
    }
on a one or two-dimensional spatial domain $\Omega\subset \R^n$. We assume that $\Omega=(a,b)$ if $n=1$ and that $\Omega$ is bounded and convex with piecewise $C^2$-boundary if $n=2$.
The system has scalar-valued input $u(t)\in \R$ and output $y(t)\in \R$ acting on the boundary with $b,c\in\Lp[2](\partial \Omega;\R)$, $b\neq 0$ and $c\neq 0$.
We assume $\wdistk[0](t)\in\R^{n_{d0}}$, $\wdistk[1](t)\in\R^{n_{d1}}$, $B_d^0(\cdot)\in \Lp[2](\Omega;\R^{1\times n_{d0}})$, and $B_d^1(\cdot)\in \Lp[2](\partial\Omega;\R^{1\times n_{d1}})$.
The PDE defines a regular linear system on $X=\Lp[2](\Omega)$~\citel{ByrGil02}{Thm.~2}, and it is unstable due to eigenvalue at $0\in\C$.

Theorem~\ref{thm:RORPcontroller} shows that if the parameters $G_1$, $G_2$, $K_1$, $K_2$, and $L$ are as in Definition~\ref{def:ContrPar}, then the robust output regulation problem 
 is solved by the controller 
\eq{
\dot{z}_1(t)&= G_1z_1(t) + G_2 (y(t)-\yref(t))\qquad z_1(0)\in Z_0\\
      \hat{x}_t(\xi,t) &= \Delta \hat{x}(\xi,t) + L(\xi) \bigl(\int_{\partial \Omega}\hspace{-1.5ex}\hat{x}(\xi,t)c(\xi)d\xi-y(t)+\yref(t)\bigr) \\
\MoveEqLeft[2.3]      \pd{\hat{x}}{n}(\xi,t)\vert_{\partial \Omega} = b(\xi)(K_1 z_1(t)+K_2 \hat{x}(\cdot,t)),
~ \hat{x}(\xi,0)= \hat{x}_0(\xi) \\
u(t)&=K_1 z_1(t)+K_2 \hat{x}(\cdot,t).
}
In particular,
$\hat{x}(\cdot,\cdot)$ is the weak solution of the PDE in the controller equations.
To construct the controller parameters, we can first choose $G_1$ and $G_2$ as in Definition~\ref{def:ContrPar} corresponding to the output space $Y=\C$ and the frequencies $0=\gw_0<\gw_1<\ldots<\gw_q$ in the considered reference and disturbance signals.
The stabilization of the system 
 can be achieved using LQR design~\cite{BanIto97} or (if $n=1$) explicit choices of 
the bounded $\Kz$ and $L$.
The results in Section~\ref{sec:ContrParGeneral} show that for $\gw=\pm \gw_k\in\R$ the values $P_K(i\gw)$ and $P_{KI}(i\gw)$
 can be computed by solving the boundary value problem%
\eq{
      i\gw x_0(\xi) &= \Delta x_0(\xi) + \psi_n(\xi) \\
      \pd{x_0}{n}(\xi)\vert_{\partial \Omega} &= b(\xi)(u_0 + \Kz x_0(\cdot)), \quad
      y_0 = \int_{\partial \Omega} \hspace{-1ex} x_0(\xi)c(\xi)d\xi.
    }
With the choices $u_0=1\in \C$ and $\psi_n=0\in \Lp[2](0,1)$ we then have $y_0=P_K(i\gw)$, and for $u_0=0$ and $\psi_n\in \Lp[2](0,1)$ we get $y_0=P_{KI}(i\gw)\psi_n$.
As in Section~\ref{sec:ContrParNumerical} the boundary value problem can be solved numerically 
 and it suffices to compute $y_0$
 for a finite number of $\psi_n$ from an orthonormal basis of $\Lp[2](\Omega)$.
In the 1D case the equations become ODEs,
 $\partial \Omega = \set{a,b}$, and $\Kz x_0 = -\int_a^b x_0(\xi)k_0(\xi)d\xi$ for some $k_0\in\Lp[2](a,b)$. 
Such boundary value problems can be solved easily and with very high precision using the free \textbf{Chebfun} MATLAB library~\cite{DriHal14book} (available at 
\href{https://www.chebfun.org/}{www.chebfun.org}).

MATLAB simulation codes for a 2D heat equation on a rectangle and a 1D heat equation
 (with spatially varying heat conductivity) 
are available at
 \href{https://github.com/lassipau/CDC22-Matlab-simulations/}{github.com/lassipau/CDC22-Matlab-simulations/}. The codes utilise 
the \textbf{RORPack} MATLAB library 
(\href{https://github.com/lassipau/rorpack-matlab/}{github.com/lassipau/rorpack-matlab/}) 
 and 
 \textbf{Chebfun} in the controller construction.
Fig.~\ref{fig:Simres} illustrates the 2D simulation example and results.

\begin{figure}[h!]
\begin{center}
\includegraphics[width=0.6\linewidth]{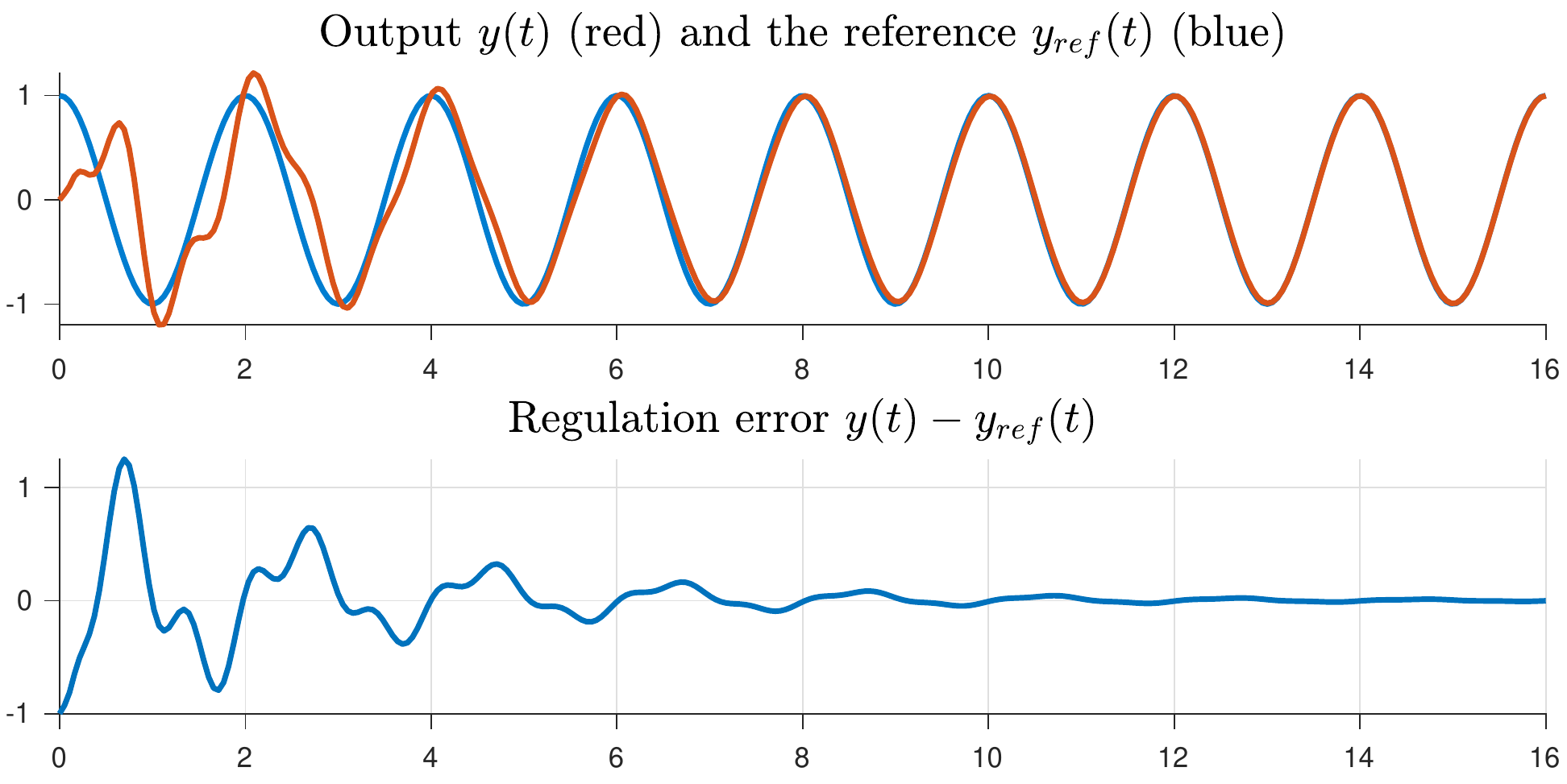}

\begin{minipage}{0.4\linewidth}
\vspace{.3cm}
\begin{center}
\includegraphics[width=0.7\linewidth]{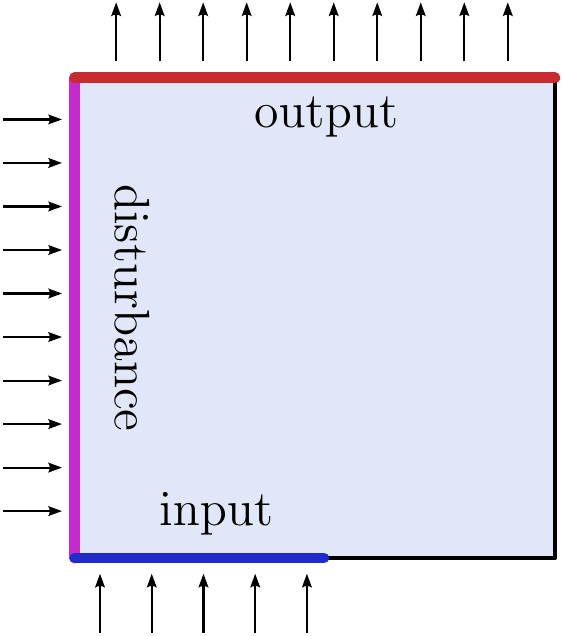}
\end{center}
\vspace{.2cm}
\end{minipage}
\hspace{.5cm}
\begin{minipage}{0.5\linewidth}
\vspace{.3cm}
\begin{center}
\includegraphics[width=0.8\linewidth]{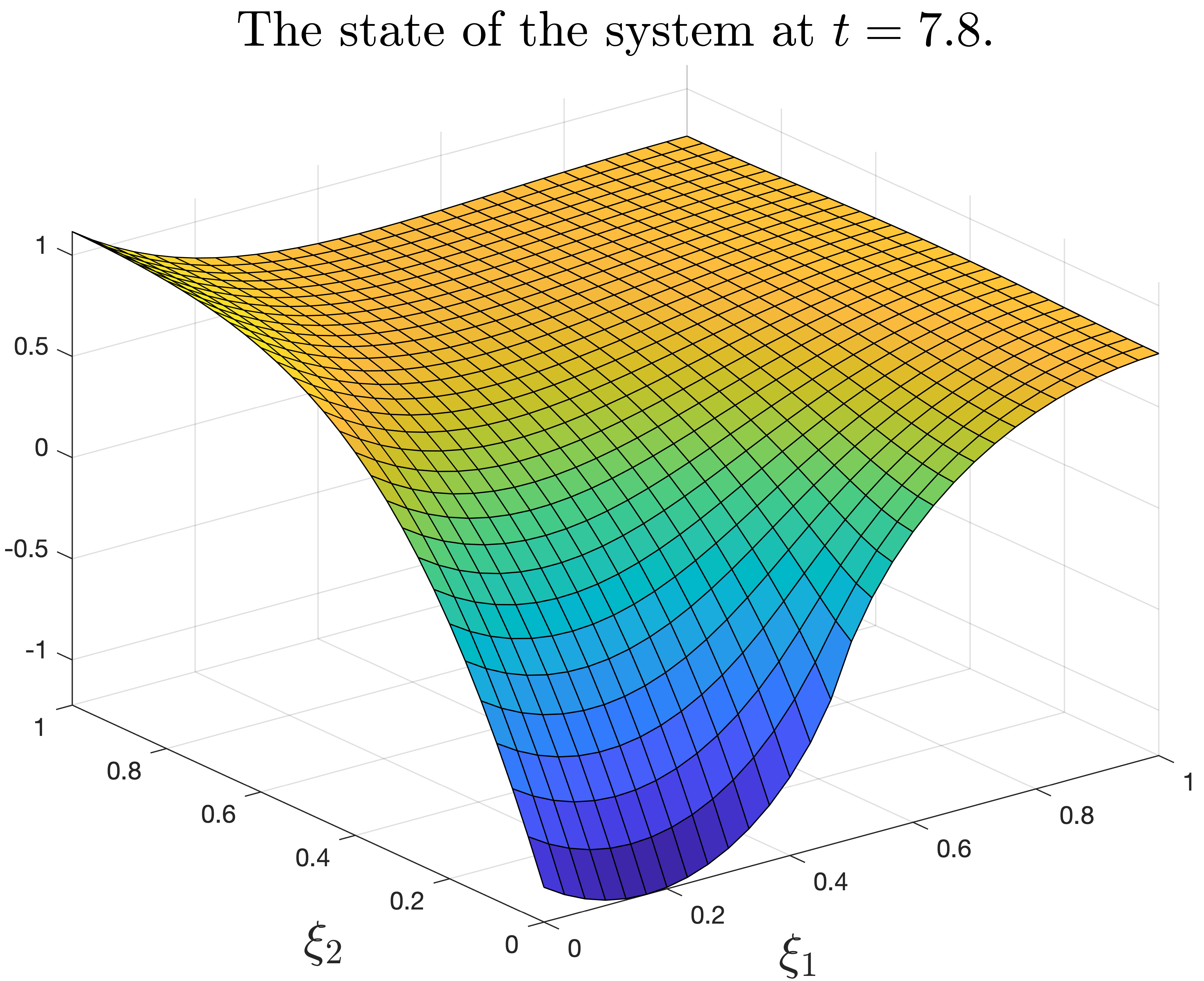}
\end{center}
\vspace{.2cm}
\end{minipage}
\caption{2D heat equation: controlled output (top), tracking error (middle), configuration (bottom left) and temperature profile $t=7.8s$ (bottom right).}
\label{fig:Simres}
\end{center}
\end{figure}


\begin{thebibliography}{10}

\bibitem{BanIto97}
H.~Thomas Banks and Kazufumi Ito.
\newblock Approximation in {LQR} problems for infinite-dimensional systems with
  unbounded input operators.
\newblock {\em J. Math. Systems Estim. Control}, 7(1):1--34, 1997.

\bibitem{ByrGil02}
Christopher~I. Byrnes, David~S. Gilliam, Victor~I. Shubov, and George Weiss.
\newblock Regular linear systems governed by a boundary controlled heat
  equation.
\newblock {\em J. Dyn. Control Syst.}, 8(3):341--370, 2002.

\bibitem{CheMor03}
Ada Cheng and Kirsten Morris.
\newblock Well-posedness of boundary control systems.
\newblock {\em SIAM J. Control Optim.}, 42(4):1244--1265, 2003.

\bibitem{CurMor09}
Ruth Curtain and Kirsten Morris.
\newblock Transfer functions of distributed parameter systems: a tutorial.
\newblock {\em Automatica J. IFAC}, 45(5):1101--1116, 2009.

\bibitem{DriHal14book}
Tobin~A. Driscoll, Nicholas Hale, and Lloyd~N. Trefethen.
\newblock {\em Chebfun Guide}.
\newblock Pafnuty Publications, 2014.

\bibitem{HamPoh00}
Timo H{\"a}m{\"a}l{\"a}inen and Seppo Pohjolainen.
\newblock A finite-dimensional robust controller for systems in the
  {CD}-algebra.
\newblock {\em IEEE Trans. Automat. Control}, 45(3):421--431, 2000.

\bibitem{HamPoh10}
Timo H{\"a}m{\"a}l{\"a}inen and Seppo Pohjolainen.
\newblock Robust regulation of distributed parameter systems with
  infinite-dimensional exosystems.
\newblock {\em SIAM J. Control Optim.}, 48(8):4846--4873, 2010.

\bibitem{Imm07a}
Eero Immonen.
\newblock On the internal model structure for infinite-dimensional systems:
  {T}wo common controller types and repetitive control.
\newblock {\em SIAM J. Control Optim.}, 45(6):2065--2093, 2007.

\bibitem{LogTow97}
Hartmut Logemann and Stuart Townley.
\newblock Low-gain control of uncertain regular linear systems.
\newblock {\em SIAM J. Control Optim.}, 35(1):78--116, 1997.

\bibitem{MalSta06}
Jarmo Malinen and Olof~J. Staffans.
\newblock Conservative boundary control systems.
\newblock {\em J. Differential Equations}, 231(1):290--312, 2006.

\bibitem{Pau16a}
Lassi Paunonen.
\newblock Controller design for robust output regulation of regular linear
  systems.
\newblock {\em IEEE Trans. Automat. Control}, 61(10):2974--2986, 2016.

\bibitem{Pau17b}
Lassi Paunonen.
\newblock Robust controllers for regular linear systems with
  infinite-dimensional exosystems.
\newblock {\em SIAM J. Control Optim.}, 55(3):1567--1597, 2017.

\bibitem{PauPha20}
Lassi {Paunonen} and Duy {Phan}.
\newblock Reduced order controller design for robust output regulation of
  parabolic systems.
\newblock {\em IEEE Trans. Automat. Control}, 65(6):2480--2493, 2020.

\bibitem{RebWei03}
Richard Rebarber and George Weiss.
\newblock Internal model based tracking and disturbance rejection for stable
  well-posed systems.
\newblock {\em Automatica J. IFAC}, 39(9):1555--1569, 2003.

\bibitem{Sal87a}
Dietmar Salamon.
\newblock Infinite-dimensional linear systems with unbounded control and
  observation: {A} functional analytic approach.
\newblock {\em Trans. Amer. Math. Soc.}, 300(2):383--431, 1987.

\bibitem{TucWei09book}
Marius Tucsnak and George Weiss.
\newblock {\em Observation and Control for Operator Semigroups}.
\newblock Birkh\"auser Basel, 2009.

\bibitem{TucWei14}
Marius Tucsnak and George Weiss.
\newblock Well-posed systems---{T}he {LTI} case and beyond.
\newblock {\em Automatica J. IFAC}, 50(7):1757--1779, 2014.

\bibitem{VanBri22arxiv}
Nicolas {Vanspranghe} and Lucas {Brivadis}.
\newblock {Output regulation of infinite-dimensional nonlinear systems: a
  forwarding approach for contraction semigroups}.
\newblock {\em arXiv e-prints}, page arXiv:2201.10146, January 2022.

\bibitem{Wei94}
George Weiss.
\newblock Regular linear systems with feedback.
\newblock {\em Math. Control Signals Systems}, 7(1):23--57, 1994.

\bibitem{XuDub17a}
Xiaodong Xu and Stevan Dubljevic.
\newblock Output and error feedback regulator designs for linear
  infinite-dimensional systems.
\newblock {\em Automatica J. IFAC}, 83:170--178, 2017.

\bibitem{Zwa04}
Hans Zwart.
\newblock Transfer functions for infinite-dimensional systems.
\newblock {\em Systems Control Lett.}, 52(3--4):247--255, 2004.

\end{thebibliography}

\end{document}